\documentclass[11pt,a4paper]{amsart}

\usepackage{amsmath}
\usepackage{amsfonts}
\usepackage{amssymb}
\usepackage{amsthm}
\usepackage{epsfig,subfig}
\usepackage{graphicx}
\usepackage[usenames,dvipsnames]{xcolor}
\usepackage[colorlinks=true,linkcolor=Blue,citecolor=Green]{hyperref}
\usepackage{url}
\usepackage{color}

\newtheorem{theorem}{Theorem}[section]

\newtheorem{remark}{Remark}[section]

\DeclareMathOperator{\inter}{int}
\def\lin{\mathop\mathrm{lin}\nolimits}

\def\conv{\mathop\mathrm{conv}\nolimits}

\def\K{\mathcal{K}}

\def\R{\mathbb{R}}
\def\N{\mathbb{N}}
\def\inter{\mathrm{int}}
\def\e{\mathrm{e}}

\def\Z{\mathbb{Z}}
\def\A{\mathrm{A}}

\def\p{\mathrm{p}}
\def\D{\mathrm{D}}

\def\cir{\mathrm{R}}

\def\inr{\mathrm{r}}

\def\e{\mathrm{e}}

\numberwithin{equation}{section}

\begin{document}

\title[A remark on inequalities under lattice constraints]{A remark on perimeter-diameter and
perimeter-circumradius inequalities under lattice constraints}

\author{Bernardo Gonz\'alez Merino \and Matthias Henze}

\address{Departamento de Matem\'aticas, Universidad de Murcia, Campus Espinar\-do, 30100-Murcia, Spain}
\email{bgmerino@um.es}
\address{Institut f\"ur Informatik, Freie Universit\"at Berlin, Takustra\ss e 9, 14195 Berlin, Germany}
\email{matthias.henze@fu-berlin.de}

\thanks{BM was supported by MINECO project
MTM2012-34037 and by ``Programa de Ayudas a Grupos de Excelencia de la
Regi\'on de Murcia'', Fundaci\'on S\'eneca, 04540/GERM/06 and MH by
the ESF EUROCORES programme EuroGIGA-VORONOI, (DFG): Ro 2338/5-1.}

\subjclass[2000]{Primary 52A10; Secondary 52A40, 52C05}

\keywords{lattice-free sets, geometric inequalities}

\begin{abstract}
In this note, we study several inequalities involving geometric functionals for lattice point-free planar convex sets.
We focus on the previously not addressed cases perimeter--diameter and perimeter--circumradius.
\end{abstract}

\maketitle

\section{Introduction}
Let $\K^2$ be the set of all planar closed convex sets and denote by $\Z^2$ the standard integer lattice in $\R^2$.
Some $K\in\K^2$ is called \emph{lattice-free} if $\inter K\cap\Z^2=\emptyset$, that is,
the interior of $K$ does not contain any lattice point of~$\Z^2$.

The perimeter, diameter, circumradius, inradius, minimal width and the
area of a convex body $K\in\K^2$ are denoted by $\p(K)$, $\D(K)$,
$\cir(K)$, $\inr(K)$, $\omega(K)$ and $\A(K)$, respectively.
The study of optimal relations between two of these functionals (for convex sets of arbitrary dimension) is a classical problem in Convex Geometry (cf.~\cite[pp.~56--59]{BF1987}).

In the planar case, there is an extensive bibliography if one adds the extra assumption
that $K$ is lattice-free (cf.~\cite{CFG1994,EGH1989,GW1993,H1977,HCS1998,S1988}).
For this situation, Hillock \& Scott~\cite{HS2002} collected the known best possible inequalities relating pairs of the six functionals above.

The only pairs that are missing in their list are $(\p,\D)$ and $(\p,\cir)$.
They have not been addressed so far and are the subject of our interest.
The fact that lattice-freeness is not preserved by arbitrary scaling is usually reflected in the non-homogeneity of the geometric inequalities that are derived.
In this spirit, we propose the study of sharp upper bounds for the non-negative
functionals $\p(K)-2\D(K)$ and $\p(K)-4\cir(K)$, for lattice-free~$K\in\K^2$.
The existence of such upper bounds is proven by
\[
\p(K)-4\cir(K)\leq\p(K)-2\D(K)\leq 2.65,
\]
which follows from $\sqrt{3}\D(K)(\p(K)-2\D(K))\leq 4\A(K)$ (see~\cite{K1924}) together with $\A(K)\leq\lambda\D(K)$ (see~\cite{S1974}), $\lambda\approx 1.144$.

We conjecture, however, that the following bounds are the best possible
\begin{align}
&\p(K)-2\D(K)\leq1+\frac{2}{\sqrt{3}}\approx2.1547\quad\text{and}\quad\p(K)-4\cir(K)\leq 2.\label{eq:optimal}
\end{align}
The equilateral triangle of edge lengths $1+2/\sqrt{3}$ for the pair $(\p,\D)$
and the split $\{x\in\R^2:0\leq x_2\leq1\}$ for the pair $(\p,\cir)$ attain equality.

In the following, we prove our conjectured inequalities in various cases, and offer sharp bounds on some non-linear functionals related to these magnitudes.
A general proof for~\eqref{eq:optimal} has to be left as an open problem.

For our first result, we need to recall the notion of an \emph{unconditional} set:
some $K\in\K^2$ that is symmetric with respect to the lines $z+\lin\{\e_1\}$ and $z+\lin\{\e_2\}$, for a suitable $z\in\R^2$.

\begin{theorem}\label{th:pRC}
Let $K\in\K^2$ be lattice-free and unconditional.
Then
\begin{equation}\label{eq:pRC}
\p(K)-2\D(K)=\p(K)-4\cir(K)\leq2.
\end{equation}
The inequality is best possible.
\end{theorem}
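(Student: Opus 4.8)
The plan is to treat the identity and the inequality separately. For the identity $\p(K)-2\D(K)=\p(K)-4\cir(K)$ only central symmetry is needed: an unconditional set is invariant under the reflections in $z+\lin\{\e_1\}$ and $z+\lin\{\e_2\}$, whose composition is the point reflection in $z$. For a centrally symmetric body the smallest enclosing disk is unique, and since the point reflection in $z$ carries it to an enclosing disk of the same radius, that disk is fixed and hence centered at $z$; thus $\cir(K)=\max_{x\in K}|x-z|$. Choosing $x\in K$ with $|x-z|=\cir(K)$, the antipode $2z-x$ also lies in $K$, so $\D(K)\ge 2\cir(K)$; as $K\subseteq B(z,\cir(K))$ gives $\D(K)\le 2\cir(K)$ for every convex body, we obtain $\D(K)=2\cir(K)$ and the claimed equality.

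It remains to prove $\p(K)-4\cir(K)\le 2$. Translate so that $z=0$. The two reflections map the four boundary arcs of $K$ lying in the closed coordinate quadrants onto one another, so $\p(K)=4L$, where $L$ is the length of the boundary arc $\Gamma$ joining the extreme points $P=(a,0)$ and $Q=(0,b)$ on the positive axes, with $2a$ and $2b$ the horizontal and vertical widths. Since $\cir(K)=\max_{x\in K}|x|=:C$ is attained on $\Gamma$, the statement is equivalent to the one-quadrant inequality
\[
L\le C+\tfrac12 .
\]

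For this I would first record two soft bounds: writing $\Gamma$ as a monotone graph over the $x_1$-axis gives $L\le a+b$, and $P,Q\in K$ give $C\ge\max\{a,b\}$. These already settle every \emph{strip-like} case $\min\{a,b\}\le\tfrac12$, since then $L-C\le (a+b)-\max\{a,b\}=\min\{a,b\}\le\tfrac12$. The substance is therefore the \emph{fat} regime $\min\{a,b\}>\tfrac12$. Here I would exploit a supporting line of $K$ at a farthest point $x^\ast=(p,q)\in\Gamma$, $|x^\ast|=C$: this line is orthogonal to $x^\ast$, and reflecting it in the two axes shows $K\subseteq\{p\,|x_1|+q\,|x_2|\le C^2\}$, a rhombus whose boundary is tangent to the circumscribed circle at $x^\ast$. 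Thus $\Gamma$ is pinched between the chord $PQ$ and this tangent rhombus, and its length can be controlled by the two tangent segments issuing from $x^\ast$.

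The hard part will be converting lattice-freeness into the quantitative control that rules out the bulging the soft bounds still permit (notice the circumscribed quarter-circle of radius $C$ is far too long). The decisive input is that the covering radius of $\Z^2$ equals $\tfrac{\sqrt2}{2}<1$, so no lattice-free set can be round; concretely, in the fat regime the maximality of $K$ forces lattice points onto $\Gamma$—as happens for the two extreme maximal bodies, the square $[0,1]^2$ and the diamond $\{|x_1-\tfrac12|+|x_2-\tfrac12|\le1\}$—and these points should bound both the size of the tangent rhombus and the position of $x^\ast$, confining $\Gamma$ between the inscribed chord and a controlled cap. I expect the completion of this estimate, uniformly in the position of $z$ in a fundamental cell and in the location of $x^\ast$ on $\Gamma$, to be the only delicate point. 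Sharpness then follows by letting the rectangle $[0,1]\times[-t,t]$ grow towards the split, where $L-C=t+\tfrac12-\sqrt{t^2+\tfrac14}\to\tfrac12$.
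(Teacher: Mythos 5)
Your reduction to the identity $\D(K)=2\cir(K)$ is correct (and more detailed than the paper, which just asserts it), your quarter-arc normalization $L\le C+\tfrac12$ is sound, and the ``strip-like'' case $\min\{a,b\}\le\tfrac12$ is correctly dispatched by the two soft bounds $L\le a+b$ and $C\ge\max\{a,b\}$; the limiting rectangles also establish sharpness. But the fat regime $\min\{a,b\}>\tfrac12$ --- which is non-empty (e.g.\ the diamond $|x_1-\tfrac12|+|x_2-\tfrac12|\le1$) and is the only place where lattice-freeness enters at all --- is not proved. You describe a tangent rhombus at the farthest point and then say you ``expect the completion of this estimate'' to work out; that expectation is the entire content of the theorem. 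Moreover, the sketch as stated rests on a false premise: you invoke ``the maximality of $K$'' to force lattice points onto $\Gamma$, but $K$ is not assumed maximal lattice-free, and you cannot pass to a maximal lattice-free superset without destroying unconditionality, nor would such an enlargement preserve control of $\p-4\cir$ (both $\p$ and $\cir$ grow under inclusion, so the difference is not monotone). So the lattice constraint is never actually converted into an estimate.

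For comparison, the paper anchors the argument not at the farthest point but at a lattice point on the boundary: after normalizing the center $z$ into $[0,1/2]^2$, the condition $0\notin\inter K$ yields a supporting line $L$ through the origin. If $L$ meets $[0,1]^2$ beyond the origin, then $d(z,L)\le\tfrac12$ and unconditionality traps $K$ in a strip of width $\le1$, whence $\inr(K)\le\tfrac12$ and the Henk--Tsintsifas inequality $\p(K)\le4\cir(K)+4\inr(K)$ finishes (this roughly parallels your strip-like case, though the strip need not be axis-parallel). Otherwise a rescaling along the ray from $z$ in direction $(-1,-1)$ places $L$ and its three reflections as supporting lines of a rescaled $K'$ that is unconditional about $(1/2,1/2)$, so $K'$ lies in the intersection of the resulting rhombus with the circumscribed box $(1/2,1/2)+\cir(K')[-1,1]^2$, and an explicit perimeter computation for that intersection gives $\p(K')\le4\cir(K')+2$. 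If you want to salvage your route, the missing step is an analogue of this: a concrete supporting line through a lattice point (not the tangent at $x^\ast$) that cuts down the circular cap. As it stands, the proposal has a genuine gap.
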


Often one can apply appropriate Steiner symmetrizations to a general lattice-free~$K$
to obtain a lattice-free unconditional set (cf.~\cite{S1974}).
Unfortunately, this method usually decreases the functional $\p(K)-4\cir(K)$ and hence
is not applicable in our situation.

Our second result shows the validity of the first conjectured inequality in~\eqref{eq:optimal} for triangles.

\begin{theorem}\label{th:pDC}
Let $T\in\K^2$ be a lattice-free triangle.
Then
\begin{equation}\label{eq:pDC}
\p(T)-2\D(T)\leq\frac2{\sqrt{3}}\left(1+\frac{\omega(T)}{\D(T)}\right).
\end{equation}
In particular, $\p(T)-2\D(T)\leq1+2/\sqrt{3}$, and
equality holds in~\eqref{eq:pDC} if and only if $T$ is an equilateral triangle with edge lengths $1+2/\sqrt{3}$.
\end{theorem}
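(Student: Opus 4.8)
The plan is to split~\eqref{eq:pDC} into a scale-free trigonometric part, valid for every triangle, and a single lattice-free input that controls the overall size. I would write $T$ with its longest side as base, so that $\D(T)$ equals this side, the minimal width $\omega(T)$ equals the altitude dropped onto it, and $\A(T)=\tfrac12\D(T)\omega(T)$. Denoting by $\alpha,\beta$ the two angles at the endpoints of the base (so that the apex angle $\gamma=\pi-\alpha-\beta$ is the largest of the three), dropping that altitude and using $\frac{1-\cos\theta}{\sin\theta}=\tan\frac\theta2$ gives the two identities
\[
\p(T)-2\D(T)=\omega(T)\Bigl(\tan\tfrac\alpha2+\tan\tfrac\beta2\Bigr),\qquad \frac{\omega(T)}{\D(T)}=\frac{1}{\cot\alpha+\cot\beta}.
\]
The first step is to record the elementary extremal facts, both tight exactly at $\alpha=\beta=\tfrac\pi3$: under the constraint $\alpha,\beta\le\gamma$ (convexity of $\tan(\cdot/2)$ and of $\cot$ on $(0,\tfrac\pi2)$ reduces each to a check at the vertex $(\tfrac\pi3,\tfrac\pi3)$ of the feasible region) one has $\tan\frac\alpha2+\tan\frac\beta2\le\frac2{\sqrt3}$ and $\cot\alpha+\cot\beta\ge\frac2{\sqrt3}$, the latter yielding the universal bound $\omega(T)/\D(T)\le\frac{\sqrt3}2$. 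Equivalently, the first estimate is just Kubota's inequality $\sqrt3\,\D(\p-2\D)\le4\A$ read through $\A=\tfrac12\D\omega$.

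The genuinely lattice-theoretic ingredient, and the step I expect to be the main obstacle, is the width--diameter estimate
\[
\omega(T)\,\D(T)\le\omega(T)+\D(T),\qquad\text{equivalently}\qquad \omega(T)\le 1+\frac{\omega(T)}{\D(T)},
\]
for lattice-free $T$. This is exactly where lattice-freeness must enter, since the trigonometric bounds above are scale invariant while~\eqref{eq:pDC} fails for large similar copies, so an absolute size bound is unavoidable. I would obtain it from the $(\omega,\D)$-entry of Hillock \& Scott's tabulation, or prove it directly by placing $T$ in the strip of width $\omega(T)$ cut out by its base and apex and tracking the lattice points that this strip is forced to contain; the sharp configuration is the equilateral triangle, for which $\omega\D=\omega+\D$ holds with equality. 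Controlling these lattice positions for an \emph{arbitrarily oriented} triangle is the technical heart of the argument.

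With these ingredients the theorem assembles quickly. Since $\tan\frac\alpha2+\tan\frac\beta2\ge0$, applying first the width--diameter bound and then the trigonometric bound gives
\[
\p(T)-2\D(T)=\omega(T)\Bigl(\tan\tfrac\alpha2+\tan\tfrac\beta2\Bigr)\le\Bigl(1+\tfrac{\omega(T)}{\D(T)}\Bigr)\Bigl(\tan\tfrac\alpha2+\tan\tfrac\beta2\Bigr)\le\frac2{\sqrt3}\Bigl(1+\tfrac{\omega(T)}{\D(T)}\Bigr),
\]
which is~\eqref{eq:pDC}; inserting $\omega(T)/\D(T)\le\frac{\sqrt3}2$ yields the consequence $\p(T)-2\D(T)\le1+\tfrac2{\sqrt3}$. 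For the equality discussion I would observe that equality forces equality in both trigonometric estimates, hence $\alpha=\beta=\gamma=\tfrac\pi3$, i.e.\ $T$ equilateral, together with equality in the width--diameter bound; writing $\omega=\tfrac{\sqrt3}2\ell$ and $\D=\ell$ for the side length $\ell$ and solving $\omega\D=\omega+\D$ pins down $\ell=1+\tfrac2{\sqrt3}$. Conversely this equilateral triangle is realized lattice-free in the extremal position, which closes the characterization.
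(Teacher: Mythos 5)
Your argument is sound and reaches the theorem by a genuinely different route than the paper. The paper's proof first rescales $T$ by $\lambda=(\omega(T)+\D(T))/(\omega(T)\D(T))$, uses Scott's inequality $(\omega(T)-1)(\D(T)-1)\leq1$ only to conclude $\lambda\geq1$, and then carries out an explicit coordinate computation on the normalized triangle (parametrized by the two overhangs $\ell,r$ of the longest edge, with the constraint $r\geq1/\sqrt{3}$ extracted from the longest-edge condition). You instead isolate the scale-invariant content in the identity $\p(T)-2\D(T)=\omega(T)\bigl(\tan\frac{\alpha}{2}+\tan\frac{\beta}{2}\bigr)$ and the bound $\tan\frac{\alpha}{2}+\tan\frac{\beta}{2}\leq2/\sqrt{3}$ (a vertex check for a convex function over the polygon with extreme points $(0,0),(\pi/2,0),(\pi/3,\pi/3),(0,\pi/2)$, correctly identified as Kubota's inequality specialized to triangles), and then feed in Scott's inequality in the equivalent form $\omega(T)\leq1+\omega(T)/\D(T)$. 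Both proofs thus rest on exactly the same lattice-theoretic input, but your decomposition replaces the paper's computation by a two-line Jensen argument and makes the equality analysis transparent (strict convexity forces $\alpha=\beta=\pi/3$, and equality in Scott pins the edge length). The ``technical heart'' you anticipated is a non-issue: $\omega(T)\D(T)\leq\omega(T)+\D(T)$ is precisely Scott's published result \cite{S1978}, which the paper simply cites, so you need not reprove it.

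Two small repairs. First, your justification of $\cot\alpha+\cot\beta\geq2/\sqrt{3}$ is backwards: the \emph{minimum} of a convex function over a polygon is not controlled by its vertices. Either fix $s=\alpha+\beta\leq2\pi/3$ and note that $\cot\alpha+\cot\beta=\sin s/(\sin\alpha\sin\beta)$ is minimized at $\alpha=\beta=s/2$, giving $2\cot(s/2)\geq2/\sqrt{3}$, or simply invoke the classical bound $\omega(T)\leq\frac{\sqrt{3}}{2}\D(T)$, which the paper also treats as known; this only affects the ``in particular'' clause. Second, for sharpness you should exhibit a lattice-free position of the extremal equilateral triangle explicitly (e.g.\ vertices $(-1/\sqrt{3},0)$, $(1+1/\sqrt{3},0)$, $(1/2,1+\sqrt{3}/2)$, whose horizontal chord at height $1$ is exactly the unit segment from $(0,1)$ to $(1,1)$), rather than asserting its existence.
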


Note that the refined inequality~\eqref{eq:pDC} is specific to triangles and does not hold for general lattice-free convex sets.

Complementing the partial results above, we found the following sharp, yet weaker inequalities relating
the magnitudes of interest.

\begin{theorem}\label{th:pRtight_pDtight}
Let $K\in\K^2$ be lattice-free. Then
\begin{itemize}
 \item[i)] $\frac{\D(K)-1}{\D(K)}(\p(K)-2\D(K))<2$,
 \item[ii)] $\frac{2\cir(K)-1}{2\cir(K)}(\p(K)-4\cir(K))<2$.
\end{itemize}
None of the inequalities can be improved.
\end{theorem}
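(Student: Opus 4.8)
The plan is to recast both inequalities as upper bounds on the perimeter. Multiplying by the positive denominators and rearranging, item~i) is equivalent to $\p(K)<\frac{2\D(K)^2}{\D(K)-1}$ whenever $\D(K)>1$ (and is trivial when $\D(K)\le 1$, since then the prefactor $\frac{\D-1}{\D}$ is non-positive while $\p-2\D\ge 0$), and item~ii) is equivalent to $\p(K)<\frac{8\cir(K)^2}{2\cir(K)-1}$ whenever $2\cir(K)>1$ (and trivial otherwise). I would first prove~i) and then reduce~ii) to it.

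For~i), I split the estimate into a metric step and a lattice step. The metric step is $\p(K)\le 2\D(K)+2\omega(K)$: taking the bounding box of $K$ aligned with a direction of minimal width, its side lengths are $\omega(K)$ and some $w'\le\D(K)$, and monotonicity of the perimeter under inclusion gives $\p(K)\le 2\omega(K)+2w'\le 2\D(K)+2\omega(K)$. The lattice step is the sharp diameter--minimal-width inequality for lattice-free sets (one of the pairs already collected in~\cite{HS2002}), which in our normalization reads $(\D(K)-1)(\omega(K)-1)\le 1$, with equality for the equilateral triangle of edge length $1+2/\sqrt3$. For $\D(K)>1$ this is exactly $2\omega(K)\le\frac{2\D(K)}{\D(K)-1}$, so combining the two steps yields $\p(K)-2\D(K)\le 2\omega(K)\le\frac{2\D(K)}{\D(K)-1}$, which is~i). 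Strictness is automatic: equality $\p=2\D+2\omega$ would require $K$ to coincide with its bounding rectangle and simultaneously $w'=\D(K)$, but a rectangle with a side equal to its diagonal is degenerate, so $\p<2\D+2\omega$ for every two-dimensional $K$ and item~i) is strict.

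For~ii), I would argue according to the size of $K$. If $\D(K)\le 2$, then $K$ is small: $\p(K)\le\pi\D(K)\le 2\pi$, while the function $g(t)=\frac{2t^2}{t-1}$ satisfies $g(t)\ge g(2)=8$ for every $t>1$, so $\p(K)\le 2\pi<8\le\frac{8\cir(K)^2}{2\cir(K)-1}$ whenever $2\cir(K)>1$. If $\D(K)>2$, then $2\cir(K)\ge\D(K)>2$ and $g$ is increasing on $[2,\infty)$; hence, using~i) in the form $\p(K)<g(\D(K))$ together with $\D(K)\le 2\cir(K)$, one gets $\p(K)<g(\D(K))\le g(2\cir(K))=\frac{8\cir(K)^2}{2\cir(K)-1}$, which is~ii). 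Jung's inequality $\cir(K)\le\D(K)/\sqrt3$ confirms that these two regimes exhaust all non-trivial cases.

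Sharpness in both items is witnessed by the thin rectangles $[0,L]\times[0,1]$ as $L\to\infty$: there $\omega=1$, the metric step becomes asymptotically tight, both prefactors tend to $1$, and $\p-2\D$ and $\p-4\cir$ tend to $2$, so the left-hand sides approach $2$ without attaining it. The conceptual crux is the lattice step $(\D-1)(\omega-1)\le 1$; if it cannot be cited directly, the genuine obstacle is to establish it, which requires a lattice-point argument controlling how the concave cross-section-length function of $K$ can rise to the value $\D$ while the interior avoids $\Z^2$, the main difficulty being an arbitrary orientation of the diameter relative to the lattice, with the equilateral triangle marking the extremal orientation.
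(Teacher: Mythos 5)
Your proof is correct, but it follows a genuinely different route from the paper's. The paper treats i) and ii) in parallel through the inradius: it cites the Henk--Tsintsifas bound $\p(K)\leq 2\D(K)+4\inr(K)\leq 4\cir(K)+4\inr(K)$ \cite{HT1994} together with the strict lattice inequalities $(2\inr(K)-1)(\D(K)-1)<1$ \cite{AS1996} and $(2\inr(K)-1)(2\cir(K)-1)<1$ \cite{SA1999}, so each part drops out in one line and strictness is inherited from the cited inequalities. You instead prove i) via the minimal width, replacing the Henk--Tsintsifas inequality by the elementary (and slightly weaker, since $2\inr\leq\omega$) bounding-box estimate $\p(K)<2\D(K)+2\omega(K)$ and replacing the inradius--diameter lattice inequality by Scott's diameter--width inequality $(\D(K)-1)(\omega(K)-1)\leq 1$ \cite{S1978}; since the latter is only non-strict, you correctly extract strictness from the metric step, and your argument there (equality would force $K$ to be its bounding rectangle with a side equal to its diagonal) is sound. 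You then derive ii) from i) via $\D(K)\leq 2\cir(K)$ and the monotonicity of $g(t)=2t^2/(t-1)$ on $[2,\infty)$, with the residual case $\D(K)\leq 2$ absorbed by $\p(K)\leq\pi\D(K)\leq 2\pi<8\leq g(2\cir(K))$. What your approach buys is self-containedness in the metric step and the reduction of ii) to i), so only one lattice-constrained inequality needs to be imported instead of two; what it costs is the case analysis in ii) and a slight asymmetry between the two parts. The sharpness witnesses (thin lattice-free rectangles of width $1$) are the same as the paper's $K_n=\conv\{(\pm n,0),(\pm n,1)\}$. The only point to make airtight in a final write-up is the citation for $(\D-1)(\omega-1)\leq 1$ for general lattice-free convex sets rather than just triangles; this is indeed the $(\D,\omega)$ entry of the table in \cite{HS2002}, so the step is legitimate, but it should be referenced explicitly since it carries all of the lattice-theoretic content of your proof.
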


Observe that our conjectured bound for the pair $(\p,\D)$ in~\eqref{eq:optimal} is independent from inequality i) above,
whereas the conjectured bound for $(\p,\cir)$ would strengthen inequality ii) by
$\frac{2\cir(K)-1}{2\cir(K)}(\p(K)-4\cir(K))\leq\frac{2\cir(K)-1}{2\cir(K)}\cdot2<2$.

\section{Proofs of the inequalities}

\begin{proof}[Proof of Theorem \ref{th:pRC}]
First of all, since $K$ is unconditional we have $\D(K)=2\cir(K)$ and it suffices to show the inequality $\p(K)-4\cir(K)\leq2$.

Let $z\in\R^2$ be the center of symmetry of $K$.
Note, that $z$ lies in the interior of $K$ and is at the same time its circumcenter.
As~$\Z^2$ is symmetric with respect to the coordinate axes, we may assume that after suitable reflections and translations of $K$ its center $z$ is contained in $[0,1/2]^2$.

Since $0\notin\inter K$, there exists a supporting line $L$ of $K$ with $0\in L$.
We first suppose that $L\cap[0,1]^2\neq\{0\}$.
Since $z\in[0,1/2]^2$, it holds $d(z,L)=\min_{y\in L}\|z-y\|\leq 1/2$, where $\|\cdot\|$ denotes the Euclidean norm.
Due to the unconditionality of~$K$, the symmetric line $L'$ to $L$ with respect to $z$ supports~$K$ as well.
Therefore, $K$ is contained in the strip determined by $L$ and $L'$ which has width at most~$1$, hence $\inr(K)\leq1/2$.
Using an inequality of Henk \& Tsintsifas~\cite{HT1994}, we get $\p(K)\leq4\cir(K)+4\inr(K)\leq4\cir(K)+2$, as desired.

We now consider the case $L\cap[0,1]^2=\{0\}$.
We shoot a ray from $z$ in direction $(-1,-1)$ and we let $q\in L$ be the intersection point of this ray and $L$.
Since $L$ has negative slope, $q_1\geq0$ if $z_1\geq z_2$, and $q_2\geq0$ if $z_1\leq z_2$.
In both cases, it follows that $0\neq\lambda=\|z-q\|\leq1/\sqrt{2}$.
Let $K':=(\lambda \sqrt{2})^{-1}(-q+K)$.
The functionals $\p$ and $\cir$ are homogeneous of degree $1$, and so
\[
\p(K)-4\cir(K)\leq(\lambda \sqrt{2})^{-1}(\p(K)-4\cir(K))=\p(K')-4\cir(K').
\]
We observe that $K'$ is unconditional with respect to $(\lambda\sqrt{2})^{-1}(-q+z)=(1/2,1/2)$,
and the line $(\lambda\sqrt{2})^{-1}(-q+L)=L$ supports $K'$.
Moreover, the unconditionality of $K'$ implies that the lines $L_1,L_2$, and $L_3$ symmetric to $L$,
with respect to $(1/2,1/2)$, $(1/2,1/2)+\lin\{\e_1\}$, and $(1/2,1/2)+\lin\{\e_2\}$, respectively, support $K'$.
Thus $K'\subseteq Q$, where $Q$ is the rhombus determined by these four lines and therefore $K'$ is lattice-free.
By definition of the circumradius, we have $K'\subseteq (1/2,1/2)+\cir(K')[-1,1]^2=Q'$.
Thus $K'\subseteq Q\cap Q'$ and hence $\p(K')\leq\p(Q\cap Q')$.

In the last step, we show
\begin{align}
\p(Q\cap Q')&\leq\p\left((1/2,1/2)+[-\cir(K'),\cir(K')]\times[-1/2,1/2]\right)\label{eq:uncond_1}\\
&=4\cir(K')+2,\nonumber
\end{align}
which implies the desired inequality (see Figure~\ref{fig:uncond}).
To this end, we remark that the four vertices of $Q$ cannot all lie in $\inter Q'$,
as this would mean that $\cir(Q)<\cir(K')\leq\cir(Q)$, a contradiction.
Thus, we assume without loss of generality that the two vertices of $Q$ that are contained in the line $(1/2,1/2)+\lin\{\e_2\}$ lie outside of $\inter Q'$.

Let $N$ be the intersection point of $L$ and the boundary of $Q'$ with $N_1\leq0$, and let
$M$ be the intersection point of $L$ with the boundary of $[0,1/2]\times[-\cir(K')+1/2,0]$ with $M_1\geq 0$.
Moreover, we define the following distances of segments in $Q$ and $Q'$ (see Figure~\ref{fig:uncond}):
\[
a=\|M\|, A=|M_1|, B'=|M_2|,\quad\textrm{and}\quad b=\|N\|, B=|N_1|, C=|N_2|.
\]

\begin{figure}[ht]
\centering
\includegraphics[width=8cm]{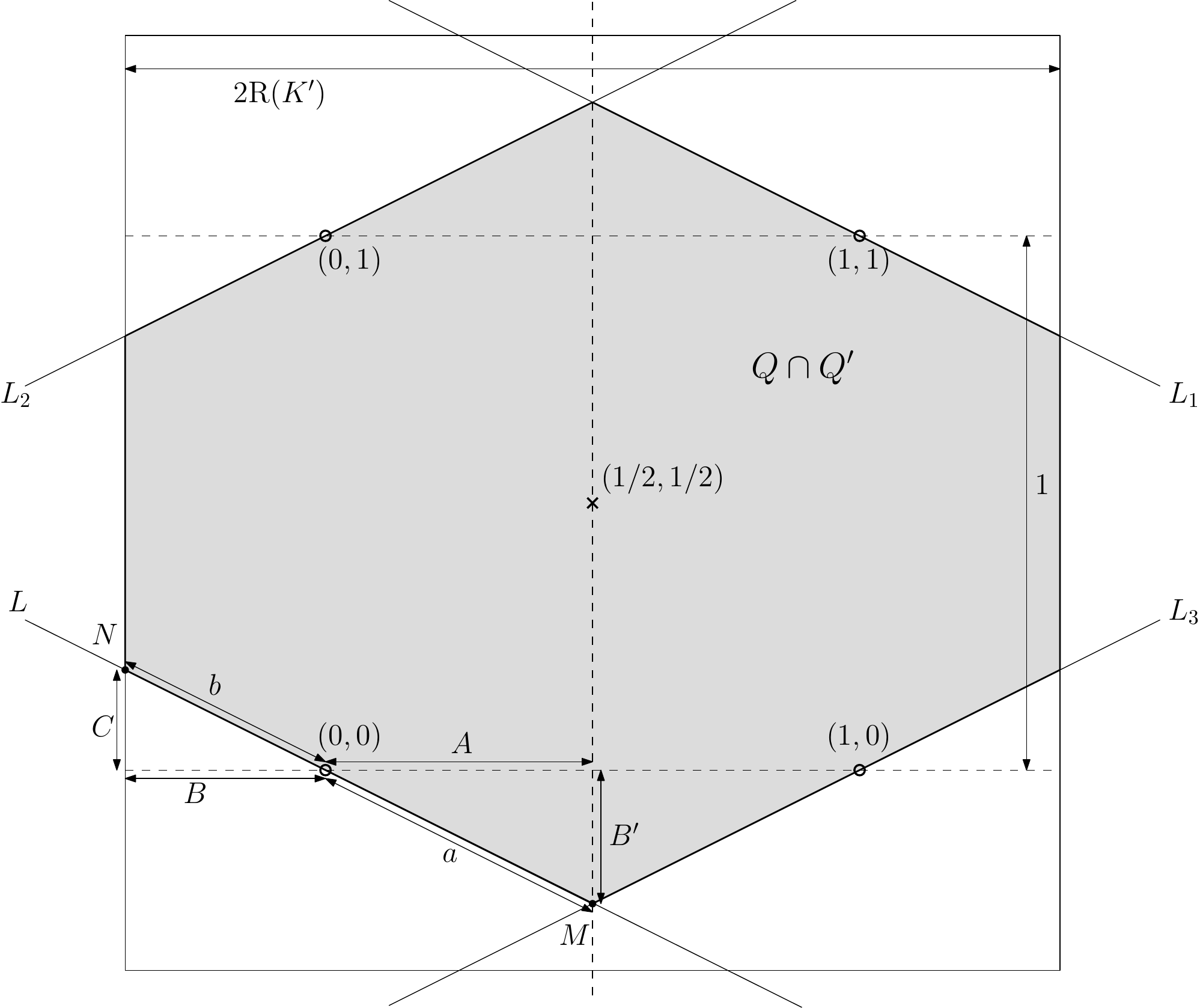}
\caption{Bounding the perimeter of $Q\cap Q'$.}
\label{fig:uncond}
\end{figure}

By the symmetry of $Q\cap Q'$, it is enough to prove $a+b\leq A+B+C$ in order to get~\eqref{eq:uncond_1}.
Using basic properties of homothetic triangles and Pythagoras' theorem, we obtain
\[
B'\leq B,\quad \frac{a}{A}=\frac{b}{B},\quad
b^2=B^2+C^2,\quad\frac{C}{B}=\frac{B'}{A}.
\]
Writing $a=bA/B$ and $b=\sqrt{B^2+C^2}$, the
inequality $a+b\leq A+B+C$ becomes
\[
\sqrt{B^2+C^2}(A+B)\leq B(A+B+C).
\]
Since $C=BB'/A$, this is equivalent to
\[
\sqrt{A^2+(B')^2}(A+B)\leq A^2+AB+BB'.
\]
Taking squares on both sides gives
\[
AB'+2BB'\leq 2AB+2B^2,
\]
which follows from $B'\leq B$.
Therefore, inequality~\eqref{eq:uncond_1} holds and we have $\p(K)-4\cir(K)\leq\p(Q\cap Q')-4\cir(K')\leq 2$.
\end{proof}

\begin{remark}
The first part of the above proof shows that, in general, if $\inr(K)\leq 1/2$ for some $K\in\K^2$, then $\p(K)-4\cir(K)\leq2$.
\end{remark}

\begin{proof}[Proof of Theorem~\ref{th:pDC}]
We start by determining the scaling factor $\lambda>0$ for which $T'=\lambda T$ is such that the
length of the segment $T'\cap L$ is equal to~$1$, where $L$ is the line that is parallel and at distance $1$
to the longest edge $e$ of~$T'$ and on the same side of $e$ as the vertex of $T'$ that is not contained in~$e$ (see Figure~\ref{fig:triangle}).

\begin{figure}[ht]
\centering
\includegraphics[width=6cm]{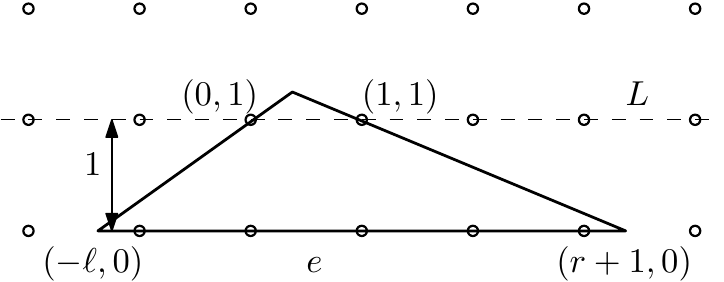}
\caption{The triangle $T'$.}
\label{fig:triangle}
\end{figure}

Since the diameter of $T'$ is attained by its longest edge, we get from Thales' Theorem that
\[
\frac{1}{\lambda\omega(T)-1}=\frac{1}{\omega(T')-1}=\frac{\D(T')}{\omega(T')}=\frac{\lambda\D(T)}{\lambda\omega(T)}=\frac{\D(T)}{\omega(T)},
\]
and thus $\lambda=(\omega(T)+\D(T))/(\omega(T)\D(T))$.
Scott~\cite{S1978} showed that for lattice-free $T$ it holds $(\omega(T)-1)(\D(T)-1)\leq 1$.
This is equivalent to $\omega(T)\D(T)\leq\omega(T)+\D(T)$ and hence $\lambda\geq 1$.
Therefore, we have $\p(T)-2\D(T)\leq\lambda(\p(T)-2\D(T))=\p(T')-2\D(T')$ and we can restrict
our attention to the triangle $T'$.

Now, we rotate and translate $T'$ appropriately such that its longest
edge lies on the $x$-axis and the chord $T'\cap L$ has endpoints $(0,1)$ and $(1,1)$.
Let us further denote the vertices of the longest edge by $(-\ell,0)$ and $(r+1,0)$, for $\ell,r\geq0$,
and we may assume that $\ell\leq r$ (see~Figure~\ref{fig:triangle}).
A straightforward computation shows that
$\left(\frac{\ell}{\ell+r},\frac{\ell+r+1}{\ell+r}\right)$ is the third vertex of
$T'$, and moreover $\omega(T')=\frac{\ell+r+1}{\ell+r}$ and $\D(T')=\ell+r+1$.
The vertices $\left(\frac{\ell}{\ell+r},\frac{\ell+r+1}{\ell+r}\right)$ and $(r+1,0)$ determine an
edge of length at most $\D(T')$, and thus
\[
\ell+r+1\geq\sqrt{\left(r+1-\frac{\ell}{\ell+r}\right)^2+\left(\frac{\ell+r+1}{\ell+r}\right)^2}.
\]
Taking squares and dividing by $(\ell+r+1)^2$ we obtain $(\ell+r)^2\geq r^2+1$, and hence $\ell\geq\sqrt{r^2+1}-r$.
Together with $\ell\leq r$, this gives $r\geq 1/\sqrt{3}$.

As $p(T')-2\D(T')$ equals the sum of the short edges minus $\D(T')$, we get
\begin{align*}
\p(T')-2\D(T')&=\frac{\ell+r+1}{\ell+r}\left(\sqrt{r^2+1}-r+\sqrt{\ell^2+1}-\ell\right)\\
&=\omega(T')\left(\sqrt{r^2+1}-r+\sqrt{\ell^2+1}-\ell\right).
\end{align*}
Since $f(r)=\sqrt{r^2+1}-r$ is non-increasing and $\ell\geq\sqrt{r^2+1}-r$,
we get an upper bound on $p(T')-2\D(T')$ by substituting $\ell$ by $\sqrt{r^2+1}-r$ as follows
\[
\p(T')-2\D(T')\leq\omega(T')\sqrt{(\sqrt{r^2+1}-r)^2+1}.
\]
Now, we define $g(r)=\sqrt{f(r)^2+1}$ and we compute that
\[
g'(r)=\frac{\left(\frac{r}{\sqrt{r^2+1}}-1\right)\left(\sqrt{r^2+1}-r\right)}{\sqrt{(r-\sqrt{r^2+1})^2+1}}\leq0.
\]
Therefore, $g(r)$ is non-increasing as well, and by $r\geq1/\sqrt{3}$, we have
$g(r)\leq g(1/\sqrt{3})=2/\sqrt{3}$.
Using the formula for the scaling factor $\lambda$, we arrive at
\begin{align}
\p(T)-2\D(T)&\leq\p(T')-2\D(T')\leq\frac2{\sqrt{3}}\omega(T')\label{eq:proof_pDC}\\
&=\frac2{\sqrt{3}}\lambda\omega(T)=\frac2{\sqrt{3}}\left(1+\frac{\omega(T)}{\D(T)}\right).\nonumber
\end{align}
It is easy to see that $\omega(T)\leq\sqrt{3}/2\,\D(T)$ and hence $\p(T)-2\D(T)\leq1+2/\sqrt{3}$.
 
Tracing back the inequalities, we see that equality holds in~\eqref{eq:proof_pDC} if and only if
$\lambda=1$ and $\ell=r=1/\sqrt{3}$.
This means that $T$ is similar to the triangle with vertices $(-1/\sqrt{3},0),(1+1/\sqrt{3},0)$, and $(1/2,1+\sqrt{3}/2)$.
This triangle is equilateral with edge lengths $1+2/\sqrt{3}$.
\end{proof}

\begin{proof}[Proof of Theorem~\ref{th:pRtight_pDtight}]
The claimed inequalities are direct consequences of
$(2\inr(K)-1)(\D(K)-1)<1$ (see~\cite{AS1996}),
$(2\inr(K)-1)(2\cir(K)-1)<1$ (see~\cite{SA1999}), and
$\p(K)\leq 2\D(K)+4\inr(K)\leq4\cir(K)+4\inr(K)$ (see~\cite{HT1994}).

We may assume, that $\D(K)>1$ and $\cir(K)>\frac12$, respectively, since i) and ii) are otherwise certainly true.
Now, we have
\[
\p(K)\leq2\D(K)+4\inr(K)<2\D(K)+2\,\frac{\D(K)}{\D(K)-1},
\]
which shows i), and part ii) follows analogously from
\[
\p(K)\leq4\cir(K)+4\inr(K)<4\cir(K)+2\,\frac{2\cir(K)}{2\cir(K)-1}.
\]

Let's see why the inequalities are tight.
Let $K_n=\conv\{(\pm n,0),(\pm n,1)\}$, for $n\in\N$.
Clearly, $K_n$ is lattice-free, $\D(K_n)=2\cir(K_n)$, and for $n\to\infty$,
\begin{align*}
\frac{\D(K_n)-1}{\D(K_n)}&(\p(K_n)-2\D(K_n))=\frac{2\cir(K_n)-1}{2\cir(K_n)}(\p(K_n)-4\cir(K_n))\\
&=\frac{2\sqrt{n^2+\frac14}-1}{2\sqrt{n^2+\frac14}}\left(4n+2-4\sqrt{n^2+\frac14}\right)\nearrow2.\qedhere
\end{align*}
\end{proof}

\subsection*{Acknowledgements}
The second author gratefully acknowledges the hospitality of the group
Convex and Differential Geometry at the University of Murcia where part of this research was carried out.
We thank Mar\'ia Hern\'andez Cifre for valuable comments and fruitful discussions.

\providecommand{\bysame}{\leavevmode\hbox to3em{\hrulefill}\thinspace}
\providecommand{\MR}{\relax\ifhmode\unskip\space\fi MR }
\providecommand{\MRhref}[2]{%
  \href{http://www.ams.org/mathscinet-getitem?mr=#1}{#2}
}
\providecommand{\href}[2]{#2}

\end{document}